\documentclass[11pt]{article}

\usepackage[utf8]{inputenc}
\usepackage{amscd,amssymb}
\usepackage{amsthm,amsmath}
\usepackage{graphicx}
\usepackage{enumerate,amssymb,setspace}
\usepackage{amsfonts,amsthm,amscd}
\usepackage{mathtools}
\usepackage{cite}
\usepackage{mathtools}
\usepackage[all]{xy}
\usepackage{hyperref}
 \usepackage{comment}

\textheight=230mm
\textwidth=158mm
\topmargin=-10mm
\oddsidemargin=-0mm
\evensidemargin=-35mm
\voffset-0.7cm

\usepackage{caption}
\usepackage{tikz}
\usetikzlibrary{snakes}

\DeclareCaptionLabelFormat{opening}{}
\captionsetup[subfigure]{labelformat=opening}

\newtheorem{theorem}{Theorem}[section]
\newtheorem{lemma}[theorem]{Lemma}
\newtheorem{prop}[theorem]{Proposition}
\newtheorem{corollary}[theorem]{Corollary}
\newtheorem{conjecture}[theorem]{{Conjecture}}
\newtheorem{example}[theorem]{{Example}}
\newtheorem{problem}[theorem]{{Problem}}
\newtheorem{question}[theorem]{{Question}}
\newtheorem{definition}[theorem]{{Definition}}
\newtheorem{claim}[theorem]{{Claim}}

\theoremstyle{remark}
\newtheorem{remark}[theorem]{Remark}

\makeatletter\@addtoreset{equation}{section} \makeatother


 \def\be{\beta}
\def\o{\omega} 
 
 \def\eps{\epsilon}

\newcommand{\dbar}{\bar\partial}
\newcommand{\ddbar}{\partial\dbar}

\def\K{K\"ahler } 
\def\KE{K\"ahler--Einstein }

\def\h#1{\hbox{#1}}

\def\strutdepth{\dp\strutbox}
\def\specialstar{\vtop to \strutdepth{
    \baselineskip\strutdepth
    \vss\llap{$\star$\ \ \ \ \ \ \ \ \  }\null}}
\def\marginalstar{\strut\vadjust{\kern-\strutdepth\specialstar}}

\def\marginal#1{\strut\vadjust{\kern-\strutdepth
    {\vtop to \strutdepth{
    \baselineskip\strutdepth
    \vss\llap{{ \small #1 }}\null}
    }}
    }

\def\text{\textstyle}

\def\q{\quad} \def\qq{\qquad}

\def\ra{\rightarrow}

\def\i{\sqrt{-1}}


\def\Fn{\mathbb{F}_n}

\def\exc{\operatorname{exc}}

\newcommand{\PP}{{\mathbb P}} \newcommand{\RR}{\mathbb{R}}
\newcommand{\QQ}{\mathbb{Q}} 
\newcommand{\ZZ}{{\mathbb Z}} \newcommand{\NN}{{\mathbb N}}
\newcommand{\FF}{{\mathbb F}}

\def\beq{\begin{equation}}
\def\eeq{\end{equation}}
\def\bpf{\begin{proof}}
\def\epf{\end{proof}}

\def\aasc{\h{$\h{\rm AA}(S,C)$}}

\def\aaxld{\h{$\h{\rm AA}(X,L,D)$}}

\def\bclaim{\begin{claim}}
\def\eclaim{\end{claim}}
\def\bdefin{\begin{definition}}
\def\edefin{\end{definition}}
\def\bcor{\begin{corollary}}
\def\ecor{\end{corollary}}
\def\bthm{\begin{theorem}}
\def\ethm{\end{theorem}}
\def\bconj{\begin{conjecture}}
\def\econj{\end{conjecture}}
\def\blem{\begin{lemma}}
\def\elem{\end{lemma}}
\def\blemma{\begin{lemma}}
\def\elemma{\end{lemma}}
\def\bprop{\begin{prop}}
\def\eprop{\end{prop}}
\def\bremark{\begin{remark}}
\def\eremark{\end{remark}}

\def\eaeq{\end{aligned}}
\def\baeq{\begin{aligned}}

\def\lb{\label}

\def\ga{\gamma}
\def\de{\delta}

\newcommand\blfootnote[1]{%
	\begingroup
	\renewcommand\thefootnote{}\footnote{#1}%
	\addtocounter{footnote}{-1}%
	\endgroup
}

\def\wt{\widetilde}

\def\thh#1{${\hbox{#1}}^{{\hbox{\notsosmall th}}}$}
\font\notsosmall=cmr7


\title{High-dimensional convex sets arising in algebraic geometry}
\author{Yanir A. Rubinstein}
\begin{document}

	\bibliographystyle{amsalpha}
	\maketitle

\font\itnotsosml=cmti7

\def\thh#1{${\hbox{#1}}^{{\hbox{\notsosmall th}}}$}
\def\thhnotsosmall#1{${\hbox{#1}}^{\hbox{\small th}}$}
\def\ndnotsosmall#1{${\hbox{#1}}^{\hbox{\small nd}}$}

\def\thhit#1{\hbox{${\hbox{#1}}^{\,{\hbox{\itnotsosml th}}}$}}
\def\thhnotsosmallit#1{${\hbox{#1}}^{\hbox{\itsml th}}$}
\def\ndnotsosmallit#1{${\hbox{#1}}^{\hbox{\itsml nd}}$}

\vglue-0.3cm
\centerline{\it Dedicated to Bo Berndtsson  on the occassion of his
6\thhit{8} birthday }

	\begin{abstract}

We introduce an asymptotic notion of positivity
in algebraic geometry that turns out to be related
to some high-dimensional convex
sets.
The dimension of the convex sets grows with the number of birational
operations. In the case of complex
surfaces we explain how to associate a linear
program to certain sequences of blow-ups and how to reduce
verifying the asymptotic log positivity to checking feasibility
of the program.
	\end{abstract}
	

	\blfootnote{Research supported by
		NSF grant DMS-1515703. I am grateful to
I. Cheltsov and J. Martinez-Garcia for collaboration over
the years on these topics, to 
G. Livshyts for the invitation to speak in the High-dimensional Seminar
in Georgia Tech,
to the editors for the invitation to contribute
to this volume, and to a referee for a careful reading and catching many typos.
	}


    
	\section{Introduction}

Convex sets have long been known to appear
in algberaic geometry. A well-known example whose
origins can be traced to Newton and Minding are 
the convex polytopes associated to toric varieties
\cite{Fulton,Gromov,Oda}, also known as Delzant polytopes in the
symplectic geometry literature \cite{Delzant}. 
In recent years, this notion has been further extended
to any projective variety, the so-called
Newton--Okounkov bodies (or `nobodies'). In the most
basic level, avoiding a formal definition, such a
body is a compact convex body (not necessarily a polytope) in $\RR^n$ 
associated to two pieces of data: a nested sequence of subvarieties inside
a projective variety of complex dimension $n$, and a line bundle over
the variety.
Among other things, beautiful relations between the notion of {\it volume}
in algebraic geometry and the volume of these bodies
have been proved  \cite{KK,LM}. 

The purpose of this note, motivated
by a talk in the High-dimensional Seminar at Georgia Tech
in Decemeber 2018, is to associate another type of convex 
bodies to projective varieties. The main novelty is that
this time the convex bodies can have {\it unbounded dimension }
while the projective variety has {\it fixed dimension} (which, for most of the discussion,
will be in fact 2 (i.e., real dimension 4)). 
In fact, the asymptotic behavior of the bodies as the dimension
grows (on the convex side) corresponds to increasingly complicated 
birational operations such as blow-ups (on the algebraic side).
Rather than volume, we will be interested
in intersection properties of these bodies.
This gives the first relation between
algebraic geometry and asymptotic convex geometry that we are
aware of.

This note will be aimed at geometers on both sides
of the story---convex and algebraic. Therefore, it
will aim to recall at least some elementary notions on both sides.
Clearly, a rather unsatisfactory compromise had to be
made on how much background to provide, but it is our
hope that at least the gist of the ideas are conveyed
to experts on both sides of the story.

\subsection{Organization}

We start by introducing asymptotic log positity in \S\ref{alpSec}.
It is a generalization of the notion of positivity of divisors
in algebraic geometry, and the new idea is that it 
concerns pairs of divisors in a particular way. 
In \S\ref{ample-angles} we associate with this new
notion of positivity a convex body, the body of ample angles.
In \S\ref{ALFSec} we explain how two previously defined
classes of varieties 
(asymptotically log Fano varieties and asymptotically log 
canonically polarized varieties) fit in with this picture.
The problem of classifying two-dimensional 
asymptotically log Fano varieties has been posed in 2013
by Cheltsov and the author and is recalled 
(Problem \ref{relationSec}) as well as the progress on it
so far. In \S\ref{convexSec} we make further progress on
this problem by making a seemingly new connection between 
birational geometry and linear programming, in the process
explaining how birational blow-up operations yield convex
bodies of increasingly high dimension.
Our main results, Theorems \ref{TailThm0} and \ref{TailThm1}, 
first reduce the characterization of
``tail blow-ups" (Definition \ref{TailDef2}) that
preserve the asymptotic log Fano property to checking
the feasibility of a certain linear program and, second, show that
the linear program can be simplified. The proof,
which is the heart of this note,
involves associating a linear program to the sequence
of blow-ups and characterizing when it is feasible.
The canonically polarized case will be discussed elsewhere.
A much more extensive classification of asymptotically 
log del Pezzo surfaces is the topic of a 
forthcoming work \cite{MGR} and we refer the reader to
Remark \ref{tailremark-3} for the relation between Theorems 
 \ref{TailThm0} and \ref{TailThm1} and 
that work.

This note is dedicated to Bo Berndtsson, whose
contributions to the modern understanding
and applications of convexity and positivity on the one hand,
and whose generosity, passion, curiosity, and wisdom on the other hand,
have had a lasting and profound influence on the author over the years.

\section{Asymptotic log positivity}
\lb{alpSec}

The key new algebraic notion that gives birth to
the convex bodies alluded to above is 
{\it asymptotic log positivity}. 
Before introducing
this notion let us first pause to explain
the classical notion of positivity, absolutely
central to algebraic geometry, on 
which entire books have been written 
\cite{Lazarsfeld-positivity}.

\subsection{Positivity}

Consider a projective manifold $X$, i.e., a smooth complex
manifold that can be embedded in  
some complex projective space $\PP^N$.
In algebraic geometry, one is often interested in 
notions of positivity. Incidentally, these notions
are complex generalizations/analogues of notions of
convexity. In discussing these notions one
interchangeably switches
between line bundles, divisors, and cohomology classes%
\footnote{A great place to read about this trinity is the cult classic text
of Griffiths--Harris \cite[\S1.1]{GH} that was
written when the latter was a graduate 
student of the former.}. 
Complex codimension 1 submanifolds
of $X$ are locally defined by a single equation.
Formal sums (with coefficients in $\ZZ$) of such submanifolds
is a  {\it divisor } 
(when the formal sums are taken with coefficients in $\QQ$ or $\RR$ this is
called a $\QQ$-divisor or a $\RR$-divisor).
By the Poincar\'e duality between homology and cohomology,
a (homology class of a) divisor $D$ gives rise to a cohomology
class $[D]$ in $H^2(M,\FF)$ with $\FF\in\{\ZZ,\QQ,\RR\}$. On the
other hand a line bundle is, roughly, a way to patch up
local holomorphic functions on $X$ to a global object (a
`holomorphic section' of the bundle). The zero locus of
such a section 
is then a formal sum of complex hypersurfaces, a divisor. 
E.g., the holomorphic sections of the hyperplane bundle in $\PP^N$ 
are linear equations in the projective coordinates 
$[z_0:\ldots:z_N]$, whose associated divisors are the
hyperplanes $\PP^{N-1}\subset \PP^N$. The associated 
cohomology class, denoted $[H]$, is the generator of $H^2(\PP^N,\ZZ)\cong\ZZ$.
The anticanonical bundle of $\PP^N$, on the other hand, is 
represented by $[(N+1)H]$ and its holomorphic sections are
homogeneous polynomials of degree $N+1$ in $z_0,\ldots,z_N$.
Either way, both of these bundles are prototypes of positive
ones, a notion we turn to describe.

Now perhaps the simplest way to define positivity, at least for a differential
geometer, is to consider the cohomology class part of the story.
A class $\Omega$
in $H^2(X,\ZZ)$ admits a representative $\o$ (written $\Omega=[\omega]$), 
a real 2-form, that can
be written locally as $\sqrt{-1}\sum_{i,j=1}^ng_{i\bar j}dz^i\wedge\overline{dz^j}$
with $[g_{i\bar j}]$ a positive Hermitian matrix, and $z_1,\ldots,z_n$
are local holomorphic coordinates on $X$.  
Since a cohomology class can be associated to both line bundles
and divisors, this gives a definition of positivity for all three.
As a matter of terminology one usually speaks of a divisor being
`ample', while a cohomology class is referred to as `positive'.
For line bundles one may use either word.
A line bundle is called negative (the divisor `anti-ample') 
if its dual is positive.

The beauty of positivity is that it can be defined in many equivalent
ways. Starting instead with the line bundle $L$, we say
$L$ is positive if it admits a smooth Hermitian
metric $h$ with positive curvature 2-form $-\i\ddbar\log h=:c_1(L,h)$. 
By Chern--Weil theory the cohomology class $c_1(L)=[c_1(L,h)]$
is independent of $h$.

\subsection{Asymptotic log positivity}

We define asymptotic log positivity/negativity similarly, but now we
will consider pairs $(L,D)$ and allow for
asymptotic corrections along a divisor $D$ (in algebraic geometry
the word log usually refers to considering the extra
data of a divisor).
Let $D=D_1+\ldots + D_r$ be a divisor on $X$.
We say that 
$(L,D=D_1+\ldots + D_r)$ is {\it asymptotically log positive/negative} if 
$L-\sum_{i=1}^r(1-\be_i)D_i$
is positive/negative for all $\be=(\be_1,\ldots,\be_r)\in U\subset (0,1)^r$
with $0\in \overline U$. 
For the record, let us give a precise definition as 
well as two slight variants.

\begin{definition}
\label{ALPDef}
Let $L$ be a line bundle over a normal projective variety $X$,
and let $D=D_1+\ldots+D_r$ be a divisor, where
$D_i, i=1,\ldots,r$ are distinct $\QQ$-Cartier prime Weil divisors
on $X$. 

$\bullet\;$
We call $(L,D)$ asymptotically log positive/negative if 
$c_1(L)-\sum_{i=1}^r(1-\be_i)[D_i]$
is positive/negative for all $\be=(\be_1,\ldots,\be_r)\in U\subset (0,1)^r$
with $0\in \overline U$.

$\bullet\;$
We say $(L,D)$ is strongly asymptotically log positive/negative if 
$c_1(L)-\sum_{i=1}^r(1-\be_i)[D_i]$
is positive/negative for all $\be=(\be_1,\ldots,\be_r)\in (0,\eps)^r$
for some $\eps>0$.

$\bullet\;$
We say $(L,D)$ is log positive/negative if 
$c_1(L)-[D]$
is positive/negative.
\end{definition}

Note that log positivity implies strong 
asymptotic log positivity which implies
asymptotic log positivity (ALP). None of the reverse
implications hold, in general.

The usual notion of positivity can be recovered 
(by openness of the positivity property) if
one required the $\be_i$ to be close to 1.
By requiring the $\be_i$ to hover instead near $0$ 
we obtain a notion that is rather different, but
more flexible and still recovers positivity. Indeed, asymptotic log positivity
generalizes positivity, as $L$ is positive
if and only if $(L,D_1)$ is asymptotically log positive
where $D_1$ is a divisor associated to $L$.
However, the ALP property allows us to `break' $L$ into
pieces and then put different weights along them, so
that $(L,D)$ could be ALP even if $L$ itself is not positive.
Let us give a simple example. 

\begin{example} {\rm
\label{Ex1}
Let $X$ be  
the blow-up of $\PP^2$ at a point $p\in\PP^2$.
Let $f$ be a hyperplane containing $p$ and let $\pi^{-1}(f)$
denote the total transform (i.e., the pull-back), the union of two curves: the
exceptional curve $Z_1\subset X$ and another curve $F\subset X$
(such that $\pi(Z_1)=p, \pi(F)=f$).
Downstairs $f$ is ample, but $\pi^{-1}(f)$ 
fails to be positive along the exceptional
curve $Z_1$. However, $(\pi^{-1}(f),Z_1)$
is ALP. 

} \end{example}

This example is not quite illustrative, though,
since it is really encoded in a classical object
in algebraic called the Seshadri constant.
In fact in the example above one does not need
to take $\beta$ small, rather it is really
$1-\be$ that is the `small parameter'
(and, actually, any $\beta\in(0,1)$ works,
reflecting that the Seshadri constant is 1 here). 

A better example is as follows.

\begin{example} {\rm
\label{Ex2}
Let $X=\Fn$ be the $n$-th Hirzebruch surface, $n\in\NN$.
Let $-K_X$ be the anticanonical bundle. It is
positive if and only if $n=0,1$. In general,
$-K_X$ is linearly equivalent to the divisor
$2Z_n+(n+2)F$ where $Z_n$ is the unique $-n$-curve
on $X$ (i.e., $Z_n^2=-n$) and $F$ is a fiber (i.e., $F^2=0$). 
A divisor of the form $aZ_n+bF$ is ample
if and only if $b>na$. Thus $(-K_X,Z_n)$ is ALP
precisely for $\be\in(0,\frac 2n)$. 

} \end{example}

\section{The body of ample angles}
\label{ample-angles}

The one-dimensional
convex body $(0,\frac 2n)$ of Example \ref{Ex2} is the simplest 
that occurs in our theory. 
Let us define the bodies that are the topic of
the present note.

Let $D=\sum_{i=1}^rD_i$, and denote
\beq
\lb{Lbeta}
L_{\beta,D}
:=
L-\sum_{i=1}^r(1-\beta_i)D_i.
\eeq
The problem of determining whether a given 
pair $(L,D=\sum_{i=1}^rD_i)$ is ALP amounts to determining whether the set 
\begin{equation}
\begin{aligned}
\label{AAEq}
\h{\rm AA}_\pm(X,L,D):=\{\be=(\be_1,\ldots,\be_r)\in(0,1)^r\,:\, 
\h{$\pm L_{\beta,D}$ is ample} \}
\end{aligned}
\end{equation}
satisfies
$$
0\in\overline{\h{\rm AA}_\pm(X,L,D)}.
$$ 
Thus, this set is a fundamental object in the
study of asymptotic log positivity.

\begin{definition}
\label{AAEq}
We call $\h{\rm AA}_+(X,L,D)$ the body of ample angles of $(X,L,D)$,
and $\h{\rm AA}_-(X,L,D)$ the body of anti-ample angles of $(X,L,D)$. 
\end{definition}

\begin{remark}
{\rm 
The body of ample angles encodes both asymptotic log positivity
and the classical notion of nefness. Indeed, if 
$(1,\ldots,1)\in\overline{\h{\rm AA}_\pm(X,L,D)}$ 
then $\pm L$ is numerically effective (nef).
}
\end{remark}

\begin{lemma}
\label{}
When nonempty, $\h{\rm AA}_\pm(X,L,D)$ is an open convex body in $\RR^r$.
\end{lemma}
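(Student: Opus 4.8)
The plan is to exploit two elementary facts: that the ample cone inside the real N\'eron--Severi space $N^1(X)_\RR$ is an open convex cone, and that the map $\beta\mapsto L_{\beta,D}$ is affine. First I would recall that the numerical classes of ample $\RR$-divisors form an open convex cone $\mathrm{Amp}(X)\subset N^1(X)_\RR$; this is standard (Kleiman's criterion, or Lazarsfeld \cite{Lazarsfeld-positivity}). Openness is immediate from the differential-geometric definition quoted in the paper: if $\omega$ is a \K form representing a class and $\eta$ is any closed real $(1,1)$-form, then $\omega+t\eta$ is still positive-definite pointwise for $|t|$ small, so a whole neighborhood of the class consists of positive classes. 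Convexity is equally immediate: a convex combination of two positive Hermitian matrices is positive.

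Next I would write the affine map $\Phi\colon \RR^r\to N^1(X)_\RR$, $\Phi(\beta)=c_1(L)-\sum_{i=1}^r(1-\beta_i)[D_i] = \big(c_1(L)-\sum_i[D_i]\big) + \sum_i \beta_i[D_i]$, which is manifestly affine (indeed the composition of a translation with the linear map $\beta\mapsto\sum_i\beta_i[D_i]$). Then by definition
\[
\mathrm{AA}_+(X,L,D) = \Phi^{-1}\big(\mathrm{Amp}(X)\big)\cap (0,1)^r .
\]
The preimage of an open set under the continuous map $\Phi$ is open, and $(0,1)^r$ is open, so the intersection is open in $\RR^r$. Likewise the preimage of a convex set under an affine map is convex, and $(0,1)^r$ is convex, so the intersection is convex. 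The sign $\pm$ case is identical after replacing $L_{\beta,D}$ by $-L_{\beta,D}$, equivalently composing with $\beta\mapsto -\Phi(\beta)$, which is still affine; or one simply notes $-\mathrm{Amp}(X)$ is again an open convex cone. This gives the statement once one checks the word ``body'' is being used to mean ``open convex subset'', under which interpretation nonemptiness plus the above is all that is required.

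The only genuine subtlety — and the step I would flag as the main point to get right — is the ambient space and the status of the classes $[D_i]$ when $X$ is merely a normal projective variety rather than smooth. The $D_i$ are $\QQ$-Cartier prime Weil divisors, so some multiple is Cartier and has a well-defined class in $N^1(X)_\RR$; thus $\Phi$ still lands in $N^1(X)_\RR$ and ampleness still makes sense via Kleiman's criterion (positivity on the Mori cone), which is the right substitute for the \K-form description in the singular setting. I would therefore phrase the proof using Kleiman's criterion: $\mathrm{Amp}(X)$ is the interior of the nef cone, hence open, and it is convex as the interior of a convex cone. Everything else is formal, so I expect no real obstacle beyond making this identification explicit; I would add a one-line remark that ``convex body'' here does not require boundedness or closedness, to forestall confusion with the usual convention in convex geometry.
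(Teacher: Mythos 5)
Your proposal is correct and is essentially the paper's argument: the paper's explicit computation of $L_{t\be+(1-t)\ga,D}$ as $t L_{\be,D}+(1-t)L_{\ga,D}$ is exactly your observation that $\be\mapsto L_{\be,D}$ is affine, combined with openness and convexity of the positive (ample) cone. Your extra care about the ambient space ($N^1(X)_\RR$ and Kleiman's criterion for normal $X$) is a reasonable refinement but does not change the substance of the proof.
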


\begin{proof}
Suppose $\h{\rm AA}_+(X,L,D)$ is nonempty.
Openness is clear since positivity (and, hence,  ampleness) 
is an open condition on $H^2(X,\RR)$. For convexity,
suppose that $\be,\ga\in\h{\rm AA}_+(X,L,D)\subset\RR^r$.
Then, for any $t\in(0,1)$,
$$
\baeq
L_{t\be+(1-t)\ga,D}
&=
L-\sum_{i=1}^r(1-t\be_i-(1-t)\ga_i)D_i
\cr
&=
(t+1-t)L-\sum_{i=1}^r(t+1-t-t\be_i-(1-t)\ga_i)D_i
\cr
&=
t\big[L-\sum_{i=1}^r(1-\be_i)D_i\big]
+
(1-t)\big[L-\sum_{i=1}^r(1-\ga_i)D_i\big]
\eaeq
$$
is positive since the positive cone within
$H^2(X,\RR)$ is convex.
If $\be,\ga\in\h{\rm AA}_-(X,L,D)\subset\RR^r$ we get
$$
-L_{t\be+(1-t)\ga,D}
=
t(-L_{\be,D})
+
(1-t)(-L_{\ga,D}),
$$
 so by the same reasoning $t\be+(1-t)\ga\in
\h{\rm AA}_-(X,L,D)$.
\end{proof}

\noindent

\begin{remark}
{\rm 
One may wonder why we require $\aaxld$ to
be contained in the unit cube. 
Indeed, that is not an absolute must.
However, we are most interested in the 
``small angle limit" as $\be\ra 0\in\RR^r$.
Still, we require the coordinates to be 
positive (and not, say, limit to $0$ from
any orthant) since, geometrically, the 
$\be_i$ can sometimes be interpreted as the
cone angle associated to a certain class
of K\"ahler edge metrics.
One could
in principle allow the whole positive
orthant, still. But in this article
we restrict to the cube for practical
reasons.
}
\end{remark}

There are many interesting questions one can ask
about these convex bodies. For instance, how do
they transform under birational operations?
We now turn to describe a special, but important, 
situation where we will be able to use tools of 
convex optimization to say something about this question.

\section{Asymptotically log Fano/canonically polarized varieties}
\lb{ALFSec}

Perhaps the most important line bundles in algebraic geometry
are the canonical bundle of $X$, denoted $K_X$, and its dual, 
the anticanonical bundle, denoted $-K_X$.
These two bundles give rise to two extremely important 
classes of varieties: 

$\bullet\;$ Fano varieties are those for which  $-K_X$ is positive
\cite{Fano,IskProk},

$\bullet\;$ Canonically polarized (general type; minimal) 
varieties are those for which $K_X$ is positive \cite{Mats}
(big; nef).
Traditionally, algebraic geometers
have been trying to {\it classify} varieties
with positivity properties of $-K_X$ and to
{\it characterize} varieties with 
positivity properties of $K_X$. 
The subtle difference in terminology here stems
from the fact that positivity properties of $-K_X$
(think `positive Ricci curvature')
are rare and can sometimes be classified into a list
in any given dimension,
while positivity or bigness of $K_X$ is  much more
common, and hence a complete list is impossible, although
one can characterize such $X$ sometimes in terms of
certain traits. 
Be it as it may, the importance of these two classes
of varieties stems from the fact that, in some
very rough sense, the Minimal Model Program stipulates that
all projective varieties can be built from 
minimal/general type and Fano pieces. Put differently,
given a projective variety $K_X$ might not have a sign,
but one should be able to perform algberaic 
surgeries (referred to as {\it birational operations} or
{\it birational maps}) 
on it to eliminate the `bad regions'
of $X$ where $K_X$ is not well-behaved. Typically, these
birational maps will make $K_X$ more positive (in some sense
the common case, hence the terminology `general type'),
except in some rare cases when  $K_X$ is essentially negative to
begin with.

\subsection{Asymptotic logarithmic positivity 
associated to (anti)canonical divisors}

Thus, given the classical importance of positivity 
of $\pm K_X$, one may try to extend this to the logarithmic
setting. 

One may pose the following question:

\begin{question}
\label{Q8.1}
What are all triples $(X,D,\be)$ such that
$\be\in\h{\rm AA}_\pm(X,-K_X,D)$?
\end{question}

It turns out that the negative case of this question
is too vast to classify, and even the positive
case is out of reach unless we make some
further assumptions. 
We now try to at least
give some feeling for why this may be so, referring
to \cite[Question 8.1]{R14} for some further discussion.
At the end of the day, we will distill from
Question \ref{Q8.1} Problem \ref{aldpprob} which
we will then take up in the rest of this note.

First, without some restrictions on the parameter
$\be$ Question \ref{Q8.1} 
becomes too vast of a generalization which does
not seem to be extremely useful. For this reason%
\footnote{Another important reason is that that the
asymptotic logarithmic regime is closely related
to understanding differential-geometric limits, 
as $\be\ra0$ towards Calabi--Yau fibrations as conjectured
in \cite{CR,R14}},
we concentrate on the {\it asymptotic} logarithmic regime,
where $\be$ is required to be arbitrarily close to the origin.

\begin{definition}
\label{ALFDef}
{\rm \cite[Definition 1.1]{CR},\cite[Definition 8.13]{R14}}
$(X,D)$ is (strongly) asymptotically log Fano/canonically polarized 
if $(-K_X,D)$
is (strongly) asymptotically log positive/negative.
\end{definition}

\begin{remark} {\rm
\label{}
Definition \ref{ALFDef} is a special case, but, in fact, 
the main motivation for Definition \ref{ALPDef}.
The first, when $L=-K_X$, was
introduced by Cheltsov and the author \cite{CR}.
The second, when $L=K_X$, was introduced by the author \cite{R14}.
} \end{remark}

\begin{remark} {\rm
\label{}
When $(-K_X,D)$ is log positive one says $(X,D)$ is log Fano,
a definition due to Maeda \cite{Maeda}. By openess, 
log Fano is the most restrictive class, a subset of strongly 
asymptotically log Fano (ALF),
itself a subset of ALF.

} \end{remark}

\begin{remark} {\rm
\label{}
There is a beautiful differential geometric interpretation
of Definition \ref{ALFDef} in terms of Ricci curvature:
$(X,D)$ is asymptotically log Fano/general type if and only if $X$  admits a  K\"ahler metric with edge singularities of arbitrarily small angle $\be_i$ along each component $D_i$ of
the complex `hypersurface' $D$, and moreover the Ricci curvature of this \K
metric is positive/negative elsewhere. 
The only if part is an easy consequence 
of the definition \cite[Proposition 2.2]{DiCerbo}, the if part is
a generalization of the Calabi--Yau theorem conjectured by Tian
\cite{Tian1994}
and proved in \cite[Theorem 2]{JMR} when $D=D_1$, see also \cite{GP} for
a different approach in the general case (cf. \cite{MR}).
When $(X,D)$ is asymptotically log canonically polarized
the statement can even be improved to the existence of a \KE edge metric. 
We refer to \cite{R14} for exposition and a survey of these and other results.
} \end{remark}

Thus, the most
basic first step to understand Question \ref{Q8.1} 
becomes the following, posed in \cite{CR}.

\begin{problem}
\label{aldpprob}
Classify all ALF pairs $(X,D)$ with $\dim X=2$
and $D$ having simple normal crossings. 
\end{problem}

Asymptotically log Fano varieties in 
dimension 2 are often referred to as 
{\it asymptotically log del Pezzo surfaces}.
The simple normal crossings (snc) assumption is
a standard one in birational geometry and is
also the case that is of interest for the study
of \K edge metrics.

\subsection{Relation to the body of ample angles}
\lb{relationSec}

The problem of determining whether a given 
pair $(X,D=\sum_{i=1}^rD_i)$ is ALF amounts to determining whether the set 
$\h{\rm AA}_+(X,-K_X,D)$ satisfies
$$
0\in\overline{\h{\rm AA}_+(X,-K_X,D)}.
$$ 
Thus, the body of ample angles is a fundamental object in the
theory of asymptotically log Fano varieties.
This can also be rephrased in terms of intersection properties:
there exists $\eps_0>0$ such that 
$\h{\rm AA}_+(X,-K_X,D)\cap B(0,\eps)\not=\emptyset$
for all $\eps\in(0,\eps_0)$, where $B(0,\eps)$ is the
ball of radius $\eps$ centered at the origin in $\RR^r$.

If one replaces ``ALF" by ``strongly ALF" in Problem 
\ref{aldpprob} the problem has been solved
\cite[Theorems 2.1,3.1]{CR}. However, it turns out that in the
strong regime $\h{\rm AA}_+(X,-K_X,D)\subset \RR^4$
\cite[Corollary 1.3]{CR}.
In sum, the general case
is out of reach using only the methods of \cite{CR}:
in fact, in this note we will exhibit ALF pairs (which
are necessarily not strongly ALF) for which 
$\h{\rm AA}_+(X,-K_X,D)$ has arbitrary large 
dimension and outline a strategy for classifying
all ALF pairs.
We hope to complete this approach in ongoing joint work
with Martinez-Garcia \cite{MGR}.

Before describing our approach to Problem \ref{aldpprob}, let us pause to state 
an open  problem concerning these bodies
(for $X$ of any dimension).

\begin{problem}
\label{AAProb2}
How does $\h{\rm AA}_\pm(X,-K_X,D)$ behave under birational maps of $X$?
\end{problem}

\section{Convex optimization and classification in algebraic
geometry}
\lb{convexSec}

We finally get to the heart of this note where 
we show how birational operations on $X$ lead
to high-dimensional convex bodies.

To emphasize that we are in dimension $2$, from now on
we use the notation $(S,C)$ instead of $(X,D)$. Also,
since we are in the `Fano regime' we will drop the 
subscript `+'  and simply denote the body of ample
angles
$$
\aasc.
$$
We denote the twisted canonical class by (recall \eqref{Lbeta})
$$
K_{\beta,S,C}:=K_S+\sum_{i=1}^r(1-\be_i)C_i.
$$
The Nakai--Moishezon criterion stipulates 
that $\be\in \aasc$ if and only if 
\beq
\lb{NMCrit}
\h{$K_{\beta,S,C}^2>0$ and $K_{\beta,S,C}.Z<0$ for every 
irreducible algebraic curve $Z$ in $X$.}
\eeq 

The first is a single quadratic equation in $\be$
while the second is a possibly infinite system of linear
equations in $\be$. We will reduce both of these to
a finite system of linear equations. 

To that end
let us fix some ALF surface $(S,C)$, i.e., suppose
$
0\in\overline{\h{\rm AA}(S,C)}.
$ 
 We now ask:

\begin{question}
\label{blowupsQ}
What are all ALF pairs that can be obtained 
as blow-ups of $(S,C)$?
\end{question}

It turns out that there are infinitely-many such
pairs; the complete analysis is quite involved \cite{MGR}.
In this article we will exhibit a particular type
of (infinitely-many) such blow-ups that yields bodies of ample angles
of arbitrary dimension.

\subsection{Tail blow-ups}

A snc divisor $c$ in a surface is called a chain if
$c=c_1+\ldots+c_r$ with $c_1.c_2=\ldots=c_{r-1}.c_r=1$
and otherwise $c_i.c_j=0$ for all $i\not= j$. In our examples
each $c_i$ will be a smooth $\PP^1$. The singular points of $c$
are the $r-1$ intersection points; all other points on
$c$ are called its smooth points.

\begin{definition}
\label{TailDef}
We say that $(S,C)$ is a single tail blow-up of $(s,c)$
if $S$ is the blow-up of $s$ at a smooth point of $c_1\cup c_r$,
and $C=\pi^{-1}(c)$.
\end{definition}

Note that $C$ has $r+1$ components, the `new' component
being the exceptional curve $E=\pi^{-1}(p)$ where 
$p\in c_1\cup c_r$. If, without loss of generality, 
$p\in c_r$ then $E.\wt c_i=\delta_{ir}$,
so
$$
C=\wt c_1+\ldots+\wt c_r+E
$$
is still a chain.

As a very concrete example, we could take 
$S=\Fn$ and $C=Z_n+F$ (recall Example
\ref{Ex2}; when $n=0$ this is simply
$S=\PP^1\times\PP^1$
and $C=\{p\}\times\PP^1 + \PP^1\times\{q\}$, the snc
divisor (with intersection point $(p,q)$)).
There are two possible single tail blow-ups: 
blowing-up a smooth point 
either on $Z_n$ or on $F$.

\subsection{Towards a classification of nested ``tail" blow-ups}

In the notation of the previous paragraph, 
if $(S,C)$ is still ALF we could perform
another tail blow-up, blowing up a point on $c_1\cup E$,
and potentially repeat the process any number of times.
We formalize this in a definition.

\begin{definition}
\label{TailDef2}
We say that $(S,C)$ is an ALF tail blow-up of an ALF pair $(s,c)$
if $(S,C)$ is ALF and is obtained from $(s,c)$ as an iterated sequence of 
single tail blow-ups that result in ALF pairs in all intermediate steps.
\end{definition}

In other words, an ALF tail blow-up is 
a sequence of single tail blow-ups that
preserve asymptotic log positivity.

\begin{problem}
Classify all ALF tail blow-ups of ALF surfaces $(\Fn,c)$.
\end{problem}

The following result reduces the characterization of
ALF tail blow-ups to the feasibility of a certain linear program.

Define

\beq
\baeq
\h{LP}(S,C):=
\{
\be_x\in(0,1)^{r+x}\,:\, 
K_{\beta_x,S,C}.Z<0\quad 
&
\h{for every $Z\subset S$ such that
$\pi(Z)\subset s$ is a}
\cr
& \h{curve intersecting $c$ at finitely-many points}
\cr
&\h{and 
passing through the blow-up locus, and}
\cr
&
\mskip-125mu
K_{\beta_x,S,C}.C_i<0, \q i=1,\ldots,r+x.
\}
\eaeq
\eeq

\begin{theorem}
\label{TailThm0}
Let $(s,c)$ be an ALF pair.
An iterated sequence of $x$ 
single tail blow-ups $\pi:S\ra s$ of $(s,c)$ is an ALF tail blow-up
if only if (i) $x\le (K_s+c)^2$, and (ii) 
$0\in\overline{\h{\rm LP}(S,C)}$.
\end{theorem}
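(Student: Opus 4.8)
The plan is to translate the Nakai--Moishezon criterion \eqref{NMCrit} for $K_{\beta_x,S,C}$ into a finite linear system and to control the quadratic term via an intersection-theoretic identity. First I would record how intersection numbers transform under a single blow-up: if $\pi\colon S\to s$ is the blow-up at a point with exceptional curve $E$, then $K_S=\pi^*K_s+E$, and for the total transform $C=\pi^{-1}(c)=\pi^*c$ one has $(K_S+C)=\pi^*(K_s+c)+E$, so $(K_S+C)^2=(K_s+c)^2-1$ because $\pi^*(K_s+c)\cdot E=0$ and $E^2=-1$. Iterating $x$ times gives $(K_S+C)^2=(K_s+c)^2-x$. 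Now observe that $K_{\beta_x,S,C}=K_S+C-\sum_{i}\beta_i C_i$ is a perturbation of $K_S+C$ by a class that is $O(|\beta|)$; hence $K_{\beta_x,S,C}^2=(K_S+C)^2+O(|\beta|)$. Since ALF requires feasibility of the Nakai--Moishezon inequalities for $\beta$ arbitrarily close to $0$, the strict inequality $K_{\beta_x,S,C}^2>0$ can hold for such $\beta$ only if $(K_S+C)^2\ge 0$, i.e. $x\le (K_s+c)^2$; this proves the necessity of (i). Conversely, when $x\le(K_s+c)^2$ the quadratic inequality $K_{\beta_x,S,C}^2>0$ is automatically satisfied for all sufficiently small $\beta$ with $|\beta|>0$ (treating the boundary case $x=(K_s+c)^2$ requires checking that the linear term in $\beta$ is strictly positive, which I would verify from the explicit expansion $K_{\beta_x,S,C}^2=(K_S+C)^2+2\sum_i\beta_i C_i\cdot(C-K_S)+O(|\beta|^2)$ together with the fact that each $C_i$ is a curve with $C_i\cdot(-K_S-C+C_i)$ controlled on the chain), so the quadratic condition does not obstruct ALF beyond (i).

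Next I would reduce the curve conditions ``$K_{\beta_x,S,C}\cdot Z<0$ for every irreducible curve $Z\subset S$'' to the finitely many inequalities defining $\h{LP}(S,C)$. The curves $Z$ split into three types. If $\pi(Z)$ is a point, then $Z$ is one of the exceptional curves, hence one of the $C_i$, and is already accounted for by the conditions $K_{\beta_x,S,C}\cdot C_i<0$. If $Z$ is a component of $C$, again it is one of the $C_i$. The remaining case is $Z$ whose image $\pi(Z)$ is a curve in $s$; here I would write $Z=\pi^*\pi(Z)-\sum m_j E_j$ with $m_j\ge 0$ the multiplicities at the successive blow-up points, so $K_{\beta_x,S,C}\cdot Z=(K_s+c)_\beta\cdot\pi(Z)-(\text{nonnegative combination coming from the }m_j\text{ and the }\beta)$. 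Because $(s,c)$ is already ALF, the class $(K_s+c)_\beta=K_s+\sum(1-\beta_i)c_i$ satisfies $(K_s+c)_\beta\cdot W<0$ for every curve $W\subset s$ and all small $\beta$ restricted to the original components; the only curves $\pi(Z)$ that can fail the inequality after subtracting the nonnegative blow-up correction are those that meet $c$ in finitely many points and pass through the blow-up locus (for curves disjoint from the blow-up centers the correction is zero and ALF of $(s,c)$ already gives negativity; for curves contained in $c$ there is nothing to check). This is exactly the class of $Z$ singled out in the definition of $\h{LP}(S,C)$. Moreover there are only finitely many such $\pi(Z)$ of bounded degree that can possibly violate the inequality, because raising the degree only makes the positive self-intersection contribution dominate; I would make this precise with a boundedness argument (the negative part of $K_{\beta_x,S,C}$ has bounded $\|\cdot\|$ as $\beta\to0$, so $K_{\beta_x,S,C}\cdot Z<0$ can fail only for $Z$ with bounded intersection against an ample class, a bounded family). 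Hence the infinite system collapses to the finite one in $\h{LP}(S,C)$.

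Combining the two reductions: $\beta_x\in\h{AA}(S,C)$ if and only if $\beta_x\in\h{LP}(S,C)$ and $K_{\beta_x,S,C}^2>0$, and under the hypothesis $x\le(K_s+c)^2$ the latter quadratic condition is implied (for $|\beta|$ small and positive) by nothing beyond $x\le(K_s+c)^2$ itself. Therefore $0\in\overline{\h{AA}(S,C)}$ --- i.e. the sequence of tail blow-ups is an ALF tail blow-up --- if and only if $x\le(K_s+c)^2$ and $0\in\overline{\h{LP}(S,C)}$. One must also check the requirement in Definition \ref{TailDef2} that \emph{all intermediate} pairs are ALF; but this is automatic, since if the final $(S,C)$ is ALF then each intermediate pair has $(K+C)^2$ no smaller (it decreases by exactly $1$ at each further blow-up) and its curve-conditions are a subset of those for $(S,C)$ via push-forward, so the intermediate LP is feasible near $0$ whenever the final one is.

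I expect the main obstacle to be the boundedness step: showing rigorously that only finitely many curve classes $Z$ can violate $K_{\beta_x,S,C}\cdot Z<0$, uniformly as $\beta\to 0$. The cleanest route is probably to fix an ample class $H$ on $S$ and an $\epsilon_0$ so that $-K_{\beta_x,S,C}$ stays within a fixed-size neighborhood of the nef class $-(K_S+C)$ for $|\beta|<\epsilon_0$; then $K_{\beta_x,S,C}\cdot Z<0$ automatically for all $Z$ with $H\cdot Z$ large, leaving a bounded family whose images in $s$ are exactly the curves listed in $\h{LP}(S,C)$. The delicate point is the boundary case $x=(K_s+c)^2$, where $-(K_S+C)$ is nef but not ample, so I would argue on the chain $C$ directly, using that the components $C_i$ and their intersection pattern are explicitly known, to see that the first-order-in-$\beta$ term rescues strict positivity of $K_{\beta_x,S,C}^2$.
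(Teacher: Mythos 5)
Your overall strategy is the same as the paper's: the drop $(K_S+C)^2=(K_s+c)^2-x$ gives necessity of (i), Nakai--Moishezon gives the reduction to curve inequalities, and the trichotomy of curves (exceptional curves, curves whose image misses the blow-up locus, curves whose image passes through it) matches the paper's types (b), (a), (c), with (a) absorbed by ALF of $(s,c)$ and (b), (c) by $\h{\rm LP}(S,C)$. The genuine gap is in the critical case $x=(K_s+c)^2$. Your claim that ``the quadratic inequality $K_{\beta_x,S,C}^2>0$ is automatically satisfied for all sufficiently small $\beta$'' is false there: the constant term vanishes, and the paper's computation (Proposition \ref{borderlineprop}, using adjunction along the chain as in \eqref{ciKscadjunEq}) gives $K_{\beta_x,S,C}^2=2\de_h+2\ga_v-O(|\be_x|^2)$, where $\de_h,\ga_v$ are only the angles attached to the two outermost exceptional curves and the linear contributions of $\be_1,\be_r$ cancel. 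This expression is positive on part of every neighborhood of the origin but not on all of it (take $\de_h,\ga_v$ much smaller than the remaining angles), so the quadratic condition is \emph{not} ``implied by nothing beyond $x\le(K_s+c)^2$''. What the argument actually requires --- and what the paper extracts from the exact form of the expansion --- is that the linear part has some strictly positive coefficients and \emph{no negative ones}, so that $K_{\beta_x,S,C}^2>0$ can be achieved \emph{simultaneously} with the linear inequalities of $\h{\rm LP}(S,C)$ and the type-(a) inequalities, along a common ray to $0$ (e.g. $\be_x=\eps(1,\dots,1)$). Your parenthetical ``check that the linear term in $\beta$ is strictly positive'' neither identifies which linear terms survive nor addresses this simultaneity issue, and without it your final ``combining'' paragraph does not close the ``if'' direction.

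A secondary point: the boundedness/finiteness digression is both unnecessary and shaky. It is unnecessary because $\h{\rm LP}(S,C)$ as defined already contains the inequalities for \emph{all} curves of type (c), so the theorem requires no finiteness statement; and as sketched it is not sound, since the perturbation $\sum_i\be_iC_i\cdot Z$ is not uniformly small as $Z$ ranges over classes of large degree, so ``$-K_{\beta_x,S,C}$ stays near the nef class $-(K_S+C)$'' does not by itself produce a bounded family. (For $Z$ not a component of $C$ one has directly $K_{\beta_x,S,C}\cdot Z\le(K_S+C)\cdot Z$, which is the cleaner observation, but again it is not needed for the statement.) Your treatment of the intermediate-steps requirement is consistent with the paper's interpolation remark and is fine as a sketch.
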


In fact, we will also show the following complementary result that
shows that (essentially) the only obstacle to completely characterizing tail
blow-ups are the (possibly) singular curves $Z$ passing through
the blow-up locus in the definition of $\h{\rm LP}(S,C)$.

Define
\beq
\baeq
\wt{\h{\rm LP}}(S,C):=
\{
\be_x\in(0,1)^{r+x}\,:\, 
K_{\beta_x,S,C}.C_i<0, \q i=1,\ldots,r+x
\}.
\eaeq
\eeq

\begin{theorem}
\label{TailThm1}
One always has $0\in\overline{\wt{\h{\rm LP}}(S,C)}$.
\end{theorem}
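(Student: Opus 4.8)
The plan is to show that $\wt{\h{\rm LP}}(S,C)$ contains points arbitrarily close to the origin by producing, for each small $\eps>0$, an explicit weight vector $\be_x\in(0,\eps)^{r+x}$ satisfying the finitely-many linear inequalities $K_{\beta_x,S,C}.C_i<0$. First I would set up the intersection-theoretic bookkeeping for a chain produced by iterated single tail blow-ups: label the components so that $C=\wt c_1+\ldots+\wt c_r+E_1+\ldots+E_x$ is a chain, record how each blow-up changes self-intersections (the point blown up lies on the current ``tail'' component, whose self-intersection drops by $1$, while the new $(-1)$-curve $E_j$ meets exactly that component), and write $K_S$ in terms of the pullback of $K_s$ plus $\sum E_j$. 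This gives closed formulas for $K_S.C_i$ and $C_i.C_j$ purely in terms of the original data $(K_s+c).c_i$ and the combinatorics of where the successive blow-ups were made.

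Next I would expand each inequality. Since $K_{\beta_x,S,C}=K_S+\sum_j(1-(\be_x)_j)C_j$, we have
\[
K_{\beta_x,S,C}.C_i
=
(K_S+C).C_i-\sum_{j}(\be_x)_j\,C_j.C_i.
\]
By adjunction $(K_S+C).C_i = (K_S+C_i).C_i + \sum_{j\ne i}C_j.C_i = (2g(C_i)-2) + \#\{j\ne i: C_j.C_i=1\}$; for a chain of smooth rational curves this is $-2$ plus the number of chain-neighbors of $C_i$, i.e.\ $-2+1=-1$ for the two endpoints and $-2+2=0$ for interior components. So the ``constant term'' $(K_S+C).C_i$ is $\le 0$ for every $i$, and is strictly negative exactly at the two ends of the chain. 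The correction $-\sum_j(\be_x)_j C_j.C_i$ is bounded in absolute value by (number of neighbors of $C_i$)$\cdot\max_j(\be_x)_j\le 2\eps$. Hence at interior components the sign could go either way, and the real task is to choose the $(\be_x)_j$ so that the small positive contributions at interior vertices are overcome.

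The mechanism I expect to use is a telescoping/decreasing choice of weights along the chain. Order the weights geometrically, say $(\be_x)_{j}=\eps\,t^{\,j}$ for a small ratio $t\in(0,1)$ to be tuned, with the indexing arranged so that the weight decreases as one moves toward the interior from whichever end carries the genuinely negative constant term; then at an interior vertex $C_i$ the term $-(\be_x)_{i-1}C_{i-1}.C_i-(\be_x)_{i+1}C_{i+1}.C_i = -(\be_x)_{i-1}-(\be_x)_{i+1}$ is dominated by $-(\be_x)_i C_i.C_i$ once $|C_i^2|$ is large enough, or, when $C_i^2$ is not favorable, one instead checks the inequality directly using that the left-neighbor weight is strictly larger. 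The two endpoint inequalities are automatic for all sufficiently small $\eps$ because their constant term is $-1<0$ and the correction is $O(\eps)$. Since there are only $r+x$ inequalities and $x\le(K_s+c)^2$ is bounded, a single choice of $(\eps,t)$ works, and letting $\eps\to0$ gives a sequence in $\wt{\h{\rm LP}}(S,C)$ converging to $0$, which is exactly $0\in\overline{\wt{\h{\rm LP}}(S,C)}$.

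The main obstacle is the interior-vertex inequalities: a priori an interior component of the chain can have self-intersection $0$ (e.g.\ a fiber class $F$ on $\Fn$ before it is touched by a blow-up), in which case there is no $-(\be_x)_i C_i^2$ term to help, and one must argue that the structure of tail blow-ups forces any such $0$-component to sit ``below'' a strictly larger weight, or that it has at most one chain-neighbor with a comparably large weight. Handling this cleanly — i.e.\ tracking exactly which components can have nonnegative self-intersection after a sequence of single tail blow-ups and verifying the monotone weight assignment defeats all of them simultaneously — is where the bulk of the (elementary but fiddly) work lies; everything else is openness of ampleness and the bounded number of constraints.
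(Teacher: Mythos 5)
Your overall strategy---producing an explicit feasible point with weights decaying geometrically from the ends of the chain toward the interior---is viable and genuinely different from the paper's proof (which writes the finitely many inequalities as a matrix inequality and checks feasibility by Gordan's theorem of the alternative, i.e., by showing the dual system has only the trivial nonnegative solution). But as written, your argument has the crux backwards. For an interior component of the chain the required inequality $K_{\beta_x,S,C}.C_i<0$ reads, exactly as your own expansion shows, $(\beta_x)_{i-1}+(\beta_x)_iC_i^2+(\beta_x)_{i+1}>0$, since $(K_S+C).C_i=0$. If $C_i^2\ge 0$ this holds automatically because all weights are positive---so the self-intersection-$0$ components you single out as ``the main obstacle'' impose no condition whatsoever. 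The genuinely nontrivial case is $C_i^2<0$: the $(-2)$-curves created by the iterated blow-ups, the strict transforms of the old tails (self-intersection $c_1^2-1$, $c_r^2-1$), and any old interior component of negative self-intersection. There the inequality becomes $(\beta_x)_{i-1}+(\beta_x)_{i+1}>|C_i^2|\,(\beta_x)_i$, so large $|C_i^2|$ makes it \emph{harder}, not easier; your sentence asserting that the neighbor terms are ``dominated by $-(\beta_x)_iC_i.C_i$ once $|C_i^2|$ is large enough'' describes precisely the failure of the required inequality in that case (and your bound ``$\le 2\eps$'' on the correction term omits the self-term $-(\beta_x)_iC_i^2$ altogether). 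Since you then defer ``the bulk of the work'' on what you have misidentified as the hard case, the proposal does not constitute a proof.

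The gap is fixable along your own lines, which shows the constructive route is in fact simpler than the paper's: choose $t<1/(1+\max_i|C_i^2|)$ and set $(\beta_x)_j=\eps\,t^{d(j)}$, where $d(j)$ is the distance from the $j$-th component to the nearer end of the chain. Every interior vertex has at least one neighbor (the one toward the nearer end) of weight $(\beta_x)_i/t>|C_i^2|(\beta_x)_i$, so all interior inequalities hold, while the two end inequalities are $-1+O(\eps)<0$; letting $\eps\to 0$ gives $0\in\overline{\wt{\h{\rm LP}}(S,C)}$. Note that this uses no hypothesis on $(s,c)$ beyond $C$ being a chain of smooth rational curves, consistent with the word ``always'' in the statement, whereas the paper instead computes the intersections through the pullback formulas of Lemma \ref{TailKXLemma} and then argues by linear-programming duality.
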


Before we embark on the proofs, a few remarks are in place.

\begin{remark} {\rm
\label{tailremark-0}
Observe that $(K_s+c)^2\ge 0$.
Indeed, since $(s,c)$ is ALF $-K_s-c$ is nef (as a limit of
ample divisors), so $(K_s+c)^2\ge0$.

} \end{remark}

\begin{remark} {\rm
\label{tailremark-1}
The proof will demonstrate that one can drop
``that result in ALF pairs in all intermediate steps"
from Definition \ref{TailDef2}, since it follows
from the fact that both $(s,c)$ and $(S,C)$ are ALF
(a sort of `interpolation' property).
} \end{remark}

\begin{remark} {\rm 
\label{tailremark-2}
We may assume that $c$ is a connected chain of $\PP^1$'s. Indeed,
when $(s,c)$ is ALF,
$c$ is either a cycle or a union of disjoint chains
\cite[Lemma 3.5]{CR} and each component is a $\PP^1$ 
\cite[Lemmas 3.2]{CR}. The former 
is irrelevant for us since there are no tails.
For the latter, we may assume that $c$ is connected (i.e.,
one chain) since
the only disconnected case, according to the classification
results \cite[Theorems 2.1,3.1]{CR}, is $(\Fn,c_1+c_2)$ with
$c_1=Z_n$ and $c_2\in|Z_n+nF|$ and then $(K_{\Fn}+c_1+c_2)^2=0$
so no tail blow-ups are allowed by Remark \ref{tailremark-1}. To see that,
let $c_1\in|aZ_n+bF|$ and $c_2\in|AZ_n+BF|$.
Since $c_1,c_2$ are effective, $b\ge na, B\ge nA$.
By assumption $c_1\cap c_2=\emptyset$ so
$0=c_1.c_2=-naA+aB+bA$, i.e., 
$
bA=a(nA-B)
$.
Since the right hand side is nonpositive and the
left hand side is nonnegative they must both be
zero, leading to
$
b=0, B=nA
$
($A=0$ is impossible since it would force $B=0$,
and $a=0$ is excluded by $b=0$).
Thus we see $c_1\in|aZ_n|, c_2\in|A(Z_n+nF)|$.
There are no smooth irreducible representatives of $|aZ_n|$ unless
$a=1$ and similarly for $|A(Z_n+nF)|$ unless $A=1$.

} \end{remark}

\begin{remark} {\rm 
\label{tailremark-3}
Theorems \ref{TailThm0} and \ref{TailThm1} are mainly given for illustrative reasons, i.e.,
to explicitly show how tools of convex programming can be used in this context.
As we show in a forthcoming extensive, but unfortunately long and 
tedious, classification work \cite{MGR} 
the case of tail blow-ups is in fact the ``worst" in terms of preserving 
asymptotic log positivity. We will give there a classification of asymptotically
log del Pezzo surfaces that completely avoids tail blow-ups since condition
(ii) in Theorem \ref{TailThm0}  is difficult to control, in general.
Thus, the present note and \cite{MGR} are somewhat complementary.
It is still an interesting open problem to classify all ALF tail blow-ups.
} \end{remark}

\subsection{The set-up}
\lb{setupsubs}

We start with an ALF pair $(s,c=c_1+\ldots+c_r)$
and perform $v+h$ single tail blow-ups 
of which 
\beq
\lb{droiteq}
\h{$h$ (`h\"ogra') tail blow-ups on the ``right tail" $c_r$}
\eeq
with associated blow-down map 
\beq
\lb{droit1eq}
\pi_H=\pi_1\circ\cdots\circ \pi_h
\eeq
and
exceptional curves 
\beq
\lb{droit2eq}
\exc(\pi_i)=H_i, \q i=1,\ldots,h,
\eeq
and of which
\beq
\lb{gaucheeq}
\h{$v$ (`v\"anster') tail blow-ups on the ``left tail" $c_1$}
\eeq
with blow-down map 
\beq
\lb{gauche1eq}
\pi_V=\pi_{h+1}\circ\cdots\circ \pi_{h+v}
\eeq
and exceptional curves 
\beq
\lb{gauche2eq}
\exc(\pi_{h+j})=V_j,\q i=1,\ldots,v,
\eeq
with new angles $\eta\in(0,1)^h$ and $\nu\in(0,1)^v$, respectively. 
Finally, we set
\beq
\lb{eta0nu0Eq}
\eta_0:=\be_r,\q
\nu_0:=\be_1.
\eeq

An induction argument shows:

\blemma
\lb{TailKXLemma}
With the notation \eqref{droiteq}--\eqref{eta0nu0Eq}, if $v,h>0$,
\begin{equation}
\begin{aligned}
\label{tailvhEq}
&-K_{(\be,\nu,\eta),S,(\pi_H\circ\pi_V)^{-1}(c)}
=\cr
&
-\pi_V^*\pi_H^*K_{\be,s,c}
-\sum_{i=1}^h(1-\eta_i+\eta_{i-1})\pi_V^*\pi_h^*\cdots\pi_{i+1}^*H_i
-\sum_{j=1}^v(1-\nu_j+\nu_{j-1})\pi_{h+v}^*\cdots\pi_{h+1+j}^*V_j
.
\end{aligned}
\end{equation}
If $v=0$,
\begin{equation}
\begin{aligned}
\label{tailhEq}
&-K_{(\be,\eta,\nu),S,\pi_H^{-1}(c)}
=
-\pi_H^*K_{\be,s,c}
-\sum_{i=1}^h(1-\eta_i+\eta_{i-1})\pi_h^*\cdots\pi_{i+1}^*H_i
.
\end{aligned}
\end{equation}
If $h=0$, 
\begin{equation}
\begin{aligned}
\label{tailvEq}
&-K_{(\be,\eta,\nu),S,\pi_V^{-1}(c)}
=
-\pi_V^*\pi_H^*K_{\be,s,c}
-\sum_{j=1}^v(1-\nu_j+\nu_{j-1})\pi_{v+h}^*\cdots\pi_{h+1+j}^*V_j
.
\end{aligned}
\end{equation}
\elemma

Before giving the proof, let us recall two elementary facts about blow-ups.
Let $\pi:S_2\ra S_1$ be the blow-up at a smooth point $p$ on
a surface $S_1$.
Then,
\beq
\lb{KXEq}
K_{S_2}=\pi^*K_{S_1}+E,
\eeq
where $E=\pi^{-1}(p)$ \cite[p. 187]{GH}, and
for every divisor $F\subset S_1$,
\beq
\lb{wtEq}
\wt F=\begin{cases}
\pi^*F, &\h{if $p\not\in F$,}\cr
\pi^*F-E, &\h{otherwise}.
\end{cases}
\eeq

\bpf
Using \eqref{KXEq}, if $v=0$,
\beq
\lb{KSheq}
K_{S}
=
\pi_{h}^*\Big(
\pi_{h-1}^*\big(
\cdots\big(
\pi_{1}^*\big(K_{s}+H_1)+H_2\big)
 +\ldots+ H_{h-2}\big)+H_{h-1}\Big)
+H_h.
\eeq
Similarly, if $h=0$,
\beq
\lb{KSveq}
K_{S}
=
\pi_{v}^*\Big(
\pi_{v-1}^*\big(
\cdots\big(
\pi_{1}^*\big(K_{s}+V_1)+V_2\big)
 +\ldots+ V_{v-2}\big)+V_{v-1}\Big)
+V_v.
\eeq
If $v,h>0$,
\beq
\lb{KSvheq}
\baeq
K_S
&=
\pi_{v+h}^*\Bigg(
\pi_{v+h-1}^*\bigg(
\cdots\Big(
\pi_{h+1}^*\big(
\pi_{h}^*\big(
\cdots\big(
\pi_{1}^*(K_{s}+H_1)+H_2\big)
\cr
&\qq\qq\qq  +\ldots+\big)
+H_h\big)+V_1\Big)+\ldots+V_{v-2}\bigg)+V_{v-1}\Bigg)
+V_v.
\eaeq
\eeq
Using \eqref{wtEq} and \eqref{eta0nu0Eq}, if $v=0$,
\beq
\lb{tailbndryhEq}
\baeq
\sum_{i=1}^{r+h}(1-\be_i)C_i
&=\sum_{i=1}^{r-1}(1-\be_i)\pi_H^*c_i
+
(1-\be_r)\pi_h^*\cdots\pi_2^*(\pi_1^*c_r-H_1)
\cr
&
\qq\q+(1-\eta_1)\pi_h^*\cdots\pi_3^*(\pi_2^*H_1-H_2)
+\ldots+(1-\eta_{h-1})(\pi_{H}^*H_{h-1}-H_h)
\cr
&
\qq\q+(1-\eta_h)H_h
\cr
&=
\sum_{i=1}^{r}(1-\be_i)\pi_H^*c_i
+
\sum_{i=1}^h(\eta_{i-1}-\eta_i)\pi_h^*\cdots\pi_{i+1}^*H_i,
\eaeq
\eeq
if $h=0$,
\beq
\lb{tailbndryvEq}
\baeq
\sum_{i=1}^{r+v}(1-\be_i)C_i
&=
(1-\be_1)\pi_{v+h}^*\cdots\pi_{h+2}^*(\pi_{h+1}^*c_1-V_1)
+
\sum_{i=2}^{r}(1-\be_i)\pi_V^*c_i
\cr
&
\qq+(1-\nu_1)\pi_{v+h}^*\cdots\pi_{h+2}^*(\pi_{h+1}^*V_1-V_2)
+\ldots+
(1-\nu_{v-1})(\pi_{v+h}^*V_{v-1}-V_v)
\cr
&
\qq+
(1-\nu_v)V_v
\cr
&=
\sum_{i=1}^{r}(1-\be_i)\pi_V^*c_i
+
\sum_{i=1}^v(\nu_{i-1}-\nu_i)\pi_v^*\cdots\pi_{i+1}^*V_i,
\eaeq
\eeq
and if $v,h>0$,
\beq
\baeq
\lb{tailbndryvhEq}
\sum_{i=1}^{r+v+h}(1-\be_i)C_i
&=
(1-\be_1)\pi_{v+h}^*\cdots\pi_{h+2}^*(\pi_{h+1}^*\pi_H^*c_1-V_1)
\cr
&
\qq+
\sum_{i=2}^{r-1}(1-\be_i)\pi_V^*\pi_H^*c_i
+
(1-\be_r)\pi_V^*\pi_h^*\cdots\pi_2^*(\pi_1^*c_r-H_1)
\cr
&
\qq+(1-\eta_1)\pi_V^*\pi_h^*\cdots\pi_3^*(\pi_2^*H_1-H_2)
+\ldots+(1-\eta_{h-1})\pi_V^*(\pi_{h}^*H_{h-1}-H_h)
\cr
&
\qq+(1-\eta_h)\pi_V^*H_h
\cr
&
\qq+(1-\nu_1)\pi_{v+h}^*\cdots\pi_{h+2}^*(\pi_{h+1}^*V_1-V_2)
+\ldots+
(1-\nu_{v-1})(\pi_{v+h}^*V_{v-1}-V_v)
\cr
&
\qq+
(1-\nu_v)V_v
\cr
=
\sum_{i=1}^{r}&(1-\be_i)\pi_V^*\pi_H^*c_i
+
\sum_{i=1}^h(\eta_{i-1}-\eta_i)\pi_V^*\pi_h^*\cdots\pi_{i+1}^*H_i
+
\sum_{i=1}^v(\nu_{i-1}-\nu_i)\pi_v^*\cdots\pi_{i+1}^*V_i.
\eaeq
\eeq
Thus, \eqref{KSvheq} and \eqref{tailbndryvhEq}  imply \eqref{tailvhEq},
\eqref{KSheq} and \eqref{tailbndryhEq}  imply \eqref{tailhEq},
and
\eqref{KSveq} and \eqref{tailbndryvEq}  imply \eqref{tailvEq}.
\epf

\bremark
\lb{dgpositiveremark}
In principle, as we will see below, 
the blow-ups on the left and on the right 
do not interact.
\eremark

\subsection{The easy direction and the sub-critical case}

We start with a simple observation. The easy direction of Theorem \ref{TailThm0} is contained in the next lemma:

\begin{lemma}
\label{necessitylemma}
Let $(s,c)$ be an ALF pair.
Let $(S,C)$ be obtained from $(s,c)$ 
via an iterated sequence of $x$ 
single tail blow-ups of $(s,c)$.
Then $(S,C)$ is not ALF  if $x> (K_s+c)^2$.

\end{lemma}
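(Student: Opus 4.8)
The plan is to use the only genuinely global constraint that the asymptotically log Fano condition imposes, namely nefness of the limiting class $-(K_S+C)$, together with the elementary observation that a single tail blow-up decreases $(K+C)^2$ by exactly one.

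First I would extract the numerical consequence of the ALF hypothesis. If $(S,C)$ is ALF then there is a sequence $\be_x\to 0$ with $-K_{\be_x,S,C}=-K_S-\sum_i(1-\be_i)C_i$ ample; by the Nakai--Moishezon criterion \eqref{NMCrit} each of these classes has positive self-intersection, and letting $\be_x\to 0$ yields $(K_S+C)^2\ge 0$. This is exactly the nefness remark already recorded in Remark \ref{tailremark-0}, applied now to $(S,C)$ in place of $(s,c)$.

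Next I would compute how $(K+C)^2$ changes under one single tail blow-up $\pi\colon S'\to S$ centered at a smooth point $p$ of $c_1\cup c_r$; say $p\in c_r$, with exceptional curve $E$ and $C'=\pi^{-1}(c)$. Since $p$ is a smooth point of $c_r$ (multiplicity one) lying on no other component, \eqref{wtEq} gives $\wt c_r=\pi^*c_r-E$ and $\wt c_i=\pi^*c_i$ for $i\ne r$, hence
$$
C'=\wt c_1+\cdots+\wt c_r+E=\pi^*C .
$$
Together with \eqref{KXEq} this gives $K_{S'}+C'=\pi^*(K_S+C)+E$, and using $\pi^*(K_S+C)\cdot E=0$ (projection formula) and $E^2=-1$,
$$
(K_{S'}+C')^2=\bigl(\pi^*(K_S+C)\bigr)^2+2\,\pi^*(K_S+C)\cdot E+E^2=(K_S+C)^2-1 .
$$
The same computation applies verbatim to a blow-up on either tail, so iterating over the $x$ single tail blow-ups one obtains $(K_S+C)^2=(K_s+c)^2-x$.

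Finally I would combine the two facts: if $x>(K_s+c)^2$, then, since $(K_s+c)^2$ is an integer (all divisors in sight are Cartier on smooth surfaces), $(K_S+C)^2=(K_s+c)^2-x\le -1<0$, contradicting the inequality $(K_S+C)^2\ge 0$ forced by the ALF property; hence $(S,C)$ is not ALF. I do not expect a real obstacle here. The only point needing a moment's care is that ``single tail blow-up'' means the center is a smooth point of $c$, i.e.\ of multiplicity one, which is precisely what makes $C'=\pi^*C$ and forces the self-intersection to drop by exactly one; and the integrality of $(K_s+c)^2$ is what upgrades the strict inequality $x>(K_s+c)^2$ to strict negativity of $(K_S+C)^2$. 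One could alternatively deduce $K_S+C=\pi^*(K_s+c)+\sum_i H_i+\sum_j V_j$ from the $\be,\eta,\nu\to 0$ limit of Lemma \ref{TailKXLemma}, but the direct induction above is shorter.
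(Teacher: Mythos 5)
Your proof is correct and follows essentially the same route as the paper: both show that each single tail blow-up satisfies $K_{S'}+C'=\pi^*(K_S+C)+E$, hence $(K_S+C)^2=(K_s+c)^2-x$, and then contradict the nonnegativity of $(K_S+C)^2$ forced by nefness of $-(K_S+C)$ (Remark \ref{tailremark-0} applied to $(S,C)$). The only cosmetic differences are that you work directly at $\be=0$ rather than with the twisted class $K_{\be,S,C}$, and your appeal to integrality of $(K_s+c)^2$ is unnecessary since $x>(K_s+c)^2$ already gives $(K_s+c)^2-x<0$.
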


\bpf 
If $c$ does not contain a tail, there is nothing
to prove.
By Remark \ref{tailremark-1}, we may assume
that $c$ is a single chain.
Let $\pi:S\ra s $ denote the blow-up of a point on
a tail $c_r$ with exceptional curve $E=:C_{r+1}$. Then,
\begin{equation*}
\begin{aligned}
\label{}
-K_{(\be,\be_{r+1}),S,C+E}
&=
-\pi^*K_s-E-\sum_{i=1}^{r}(1-\be_i)\wt c_i
-(1-\be_{r+1})E
\cr
&=
-\pi^*K_s-E-\sum_{i=1}^{r-1}(1-\be_i)\pi^*c_i
-(1-\be_r)(\pi^*c_r-E)-(1-\be_{r+1})E
\cr
&=
-\pi^*K_{\be,s,c}
-(1+\be_r-\be_{r+1})E.
\end{aligned}
\end{equation*}
In particular, since $E^2=-1$, $K_{(0,0),S,C+E}^2=K_{0,s,c}^2-1$. 
An induction (or directly using Lemma \ref{TailKXLemma})
 thus shows that $(K_S+C)^2=(K_s+c)^2-x$,
which shows that $-K_S-C$ cannot be nef if $x>(K_s+c)^2$,
so $(S,C)$ cannot be ALF, by Remark \ref{tailremark-0}.
\epf

\subsection{Dealing with the quadratic constraint and the critical case}

Let 
$$
\be_x=(\be,\be_{r+1},\ldots,\be_{r+x})\in\RR^{r+x}.
$$
The proof of Lemma \ref{necessitylemma} also shows that 
$$
K_{\beta_x,S,C}^2=K_{\be,s,c}^2-x+f(\be_x),
$$
where $f:\RR^{r+x}\ra \RR$ is a quadratic polynomial
with no constant term and whose coefficients are integers
bounded by a constant depending only on $r+x$.
Thus, we also obtain some information regarding the converse
to Lemma \ref{necessitylemma}:

\bcor
\lb{subcriticalcor}
Let $(s,c)$ be an ALF pair.
Let $(S,C)$ be obtained from $(s,c)$ 
via an iterated sequence of $x$ 
single tail blow-ups of $(s,c)$.
Then $K_{\beta,S,C}^2>0$ for all sufficiently small (depending
only on $r,x$, hence only on $r,s,c$)
$\be_x\in\RR^{r+x}$ if $x< (K_s+c)^2$.
\ecor

This corollary is useful since it implies the quadratic
inequality in \eqref{NMCrit} can be completely ignored 
except, perhaps, in the borderline case $x=(K_s+c)^2$.

The next result treats precisely that borderline case:

\bprop
\lb{borderlineprop}
Let $(s,c)$ be an ALF pair.
Let $(S,C)$ be obtained from $(s,c)$ 
via an iterated sequence of $x:=(K_s+c)^2$ 
single tail blow-ups of $(s,c)$.
Then 
\beq
\lb{KbetaquadEq}
K_{\beta,S,C}^2=f(\be_x),
\eeq
where $f:\RR^{r+x}\ra \RR$ is a quadratic polynomial
with no constant term and whose coefficients are integers
bounded by a constant depending only on $r+x$, and moreover
it contains linear terms with positive coefficients and no
linear terms with negative coefficients.
In particular, $K_{\beta,S,C}^2>0$ for all sufficiently small (depending
only on $r,x$, hence only on $r,s,c$)
$\be_x\in(0,1)^{r+x}$.
\eprop

\begin{remark} {\rm
\label{tailremark-1}
The key for later will be \eqref{KbetaquadEq} rather than
the conclusion about $K_{\beta,S,C}^2>0$ for all sufficiently small
angles. In fact, the latter conclusion (at the end of
Proposition \ref{borderlineprop}) is not precise enough to
conclude that the quadratic inequality in \eqref{NMCrit}
 can be ignored as one needs that it holds {\it simultaneously}
with the intersection inequalities of  \eqref{NMCrit}. The exact
form of  \eqref{KbetaquadEq} implies that  \eqref{KbetaquadEq}
can be satisfied together with any {\it linear} constraints
on $\be_x$, which will be the key, and the reason that, ultimately,
the  quadratic inequality in \eqref{NMCrit} {\it can} be ignored.
} \end{remark}

\bpf
We use the notation of \S\ref{setupsubs}.
We wish to show that 
\begin{equation}
\begin{aligned}
\label{aequivtoEq}
K_{(\be,\delta,\ga),S,(\pi_H\circ\pi_V)^{-1}(c)}^2
>0, \q 
\h{for some small $(\be,\delta,\ga)\in(0,1)^{r+h+v}$}
\end{aligned}
\end{equation}
(recall $x=h+v=(K_s+c)^2$).
We compute,
\begin{equation}
\begin{aligned}
\label{}
K_{(\be,\delta,\ga),S,(\pi_H\circ\pi_V)^{-1}(c)}^2
&=
K_{\be,s,c}
-
\sum_{i=1}^h(1-\de_i+\de_{i-1})^2
-\sum_{j=1}^v(1-\ga_j+\ga_{j-1})^2
\cr
&=
(K_s+c)^2
-2\sum_{i=1}^r\be_i c_i.(K_s+c)
+\sum_{i=1}^r\be_i^2 c_i^2
\cr
&
\qq \qq 
-h+
2\sum_{i=1}^h\de_i
-
2\sum_{i=1}^h\de_{i-1}
-
v
+
2\sum_{j=1}^v\ga_j
-
2\sum_{j=1}^v\ga_{j-1}
\cr
&
\qq \qq 
-
\sum_{i=1}^h(\de_i-\de_{i-1})^2
-\sum_{j=1}^v(\ga_j-\ga_{j-1})^2
\cr
&=
-2\sum_{i=1}^r\be_i c_i.(K_s+c)
+
2\de_h-2\be_{r}
+
2\ga_v-2\be_1
-
O(\be^2,\de^2,\ga^2)
\cr
&=
2\be_1+2\be_r 
+
2\de_h-2\be_{r}
+
2\ga_v-2\be_1
-
O(\be^2,\de^2,\ga^2)
\cr
&=
2\de_h
+
2\ga_v
-
O(\be^2,\de^2,\ga^2)
,
\end{aligned}
\end{equation}
since, by Remark \ref{tailremark-2}, %
all
$c_i$ are smooth rational curves and $c$ is a single chain, 
so by adjunction
\begin{equation}
\begin{aligned}
\label{ciKscadjunEq}
c_i.(K_s+c)=
\begin{cases}
c_i.(K_s+c_i)+c_i.c_{i-1}+c_i.c_{i+1}=-2+1+1=0,
&
\h{if $i=2,\ldots,r-1$,}
\cr
c_r.(K_s+c_r)+c_r.c_{r-1}=-2+1=-1,
&
\h{if $i=r$,}
\cr
c_1.(K_s+c_1)+c_1.c_{2}=-2+1=-1,
&
\h{if $i=1$}.
\cr
\end{cases}
\end{aligned}
\end{equation}
This is clearly positive for $(\be,\de,\ga)=\eps(1,\ldots,1)$
for $\eps$ small enough. 
This proves the Proposition.
\epf

\bremark
As alluded to in the remark preceeding the proof,
one indeed can make $2\de_h
+
2\ga_v
-
O(\be^2,\de^2,\ga^2)
$ 
positive under any linear constraints on $\be,\de,\ga$ 
without imposing any new linear constraints
as the coefficients of the only non-zero linear terms are positive. 

\eremark

\subsection{Proof of Theorem \ref{TailThm0}}

First, suppose either (i) or (ii) does not hold.
If (i) fails then Lemma \ref{necessitylemma}
shows that $(S,C)$ is not ALF. If (ii) fails then
$(S,C)$ is not ALF by Definition \ref{ALFDef}.

Second, if both (i) and (ii) hold then  
Corollary \ref{subcriticalcor}, Proposition
\ref{borderlineprop}, and the Nakai--Moishezon 
criterion show that $(S,C)$ is ALF
if and only if $K_{\be_x,S,C}.Z<0$ for every
irreducible curve $Z\subset S$.
Naturally, 
we distinguish between three types of curves $Z$:

\smallskip
(a)  $\pi(Z)$ does not pass through the blow-up locus,

(b)  $\pi(Z)$ is contained in the blow-up locus,
 
(c)  $\pi(Z)$ is a curve passing through the blow-up locus.

\smallskip
Curves of type (a) can be ignored: Indeed, then $\pi(Z)$
is a curve in $s$ and $Z=\pi^*\pi(Z)$ (hence, does not intersect
any of the exceptional curves) so by Lemma \ref{TailKXLemma},
$$
K_{\beta,S,C}.Z=\pi^*K_{(\beta_1,\ldots,\be_r),s,c}.\pi^*\pi(Z)=
K_{(\beta_1,\ldots,\be_r),s,c}.\pi(Z).
$$ 
As $(s,c)$ is ALF, this intersection
number is negative.

Next, curves of type (c) are covered by condition (ii)
by the definition of $\h{\rm LP}(S,C)$. Finally,  since
curves of type (b) are, by definition of the tail blow-up, components
of the new boundary $C$, hence there are at most $x+2$ (i.e., finitely-many) of them,
certainly contained in the  finitely-many inequalities:
\beq
\lb{Kbeboundaryineq}
K_{\beta_x,S,C}.C_i<0, \q i=1,\ldots,r+x,
\eeq
which are once again covered by the definition of $\h{\rm LP}(S,C)$.
This concludes the proof of Theorem \ref{TailThm0}.

\subsection{Reduction of the linear intersection constraints}

In this subsection we  explain how to essentially further reduce the 
linear intersection constraints, i.e., we prove Theorem
\ref{TailThm1}. To that purpose, we show that curves of type 
(b) can be handled directly. This shows that the only potential
loss of asymptotic logarithmic positivity occurs from curves
of type (c) (observe that as in the previous subsection, curves
of type (a) can be ignored).

\begin{proof}[Proof of Theorem \ref{TailThm1}]
It suffices to check that
the system of $2r+2x$ inequalities 
\beq
\baeq
\lb{totalineq}
K_{\beta_x,S,C}.C_i
&<0, \q i=1,\ldots,r+x,
\cr
\be_i
&>0, \q i=1,\ldots,r+x,
\eaeq
\eeq
admit a solution along some ray emanating from the origin
in $\RR^{r+x}$. 

Let us first write these inequalities carefully
and by doing so eliminate some unnecessary ones.

Using Lemma \ref{TailKXLemma} we compute, starting with the tails, which turn out to pose
no constraints, to wit,
$$
\begin{aligned}
-K_{(\be,\delta,\ga),S,(\pi_H\circ\pi_V)^{-1}(c)}
.V_v
=
1-\ga_v+\ga_{v-1}>0,
\cr
-K_{(\be,\delta,\ga),S,(\pi_H\circ\pi_V)^{-1}(c)}
.\pi_V^*H_h
=
1-\de_h+\de_{h-1}>0.
\end{aligned}
$$
Next, we intersect with the other new boundary curves
(if $h,v>0$ there are $h+v-2$  such, if $h=0$ there are
$v-1$ such, if $v=0$ there are $h-1$ such),
\begin{equation}
\begin{aligned}
\label{dgminus2intersectEq}
-K_{(\be,\delta,\ga),S,(\pi_H\circ\pi_V)^{-1}(c)}
.\pi_{h+v}^*\cdots\pi_{h+j+1}^*(\pi_{h+j}^*V_{j-1}-V_j)
&=
(1-\ga_{j-1}+\ga_{j-2})
-
(1-\ga_j+\ga_{j-1})
\cr
&=
\ga_j-2\ga_{j-1}+\ga_{j-2},\q j=2,\ldots,v.
\cr
-K_{(\be,\delta,\ga),S,(\pi_H\circ\pi_V)^{-1}(c)}
.\pi_V^*\pi_h^*\cdots\pi_{i+1}^*(\pi_{i}^*H_{i-1}-H_i)
&=
(1-\de_{i-1}+\de_{i-2})
-
(1-\de_i+\de_{i-1})
\cr
&=
\de_i-2\de_{i-1}+\de_{i-2},\q i=2,\ldots,h.
\end{aligned}
\end{equation}
Finally, we intersect with the two `old tails'
(or only one if $\min\{h,v\}=0$),
and use \eqref{ciKscadjunEq},
\beq
\lb{final2ineq}
\begin{aligned}
-K_{(\be,\delta,\ga),S,(\pi_H\circ\pi_V)^{-1}(c)}
.
\pi_{h+v}^*\cdots\pi_{h+2}^*(\pi_{h+1}^*\pi_H^*c_1-V_1)
&=
-K_{\be,s,c}.c_1-
(1-\ga_1+\be_{1})
\cr
&=
1+\be_1c_1^2-(1-\ga_1+\be_{1})
\cr
&=\ga_1+(c_1^2-1)\be_1,
\cr
-K_{(\be,\delta,\ga),S,(\pi_H\circ\pi_V)^{-1}(c)}
.
\pi_V^*\pi_h^*\cdots\pi_2^*(\pi_1^*c_r-H_1)
&=
-K_{\be,s,c}.c_r-
(1-\de_1+\be_{r})
\cr
&=
1+\be_rc_r^2-(1-\de_1+\be_{r})
\cr
&=\de_1+(c_r^2-1)\be_r.
\end{aligned}
\eeq
\def\LP{\hbox{\rm LP}}
\def\Mat{\hbox{\rm Mat}}
Equations \eqref{dgminus2intersectEq}--\eqref{final2ineq}
are $h+v$ linear equations
that together with the $r+h+v$  constraints
$$
\be_x=(\be,\de,\ga)\in\RR_+^{r+h+v}
$$ 
can
be encoded by a $(r+h+v)$-by-$(r+2h+2v)$ matrix
inequality:
\begin{equation}
\begin{aligned}
\label{tailLPIneq}
(\be,\de,\ga)\LP(S,(\pi_H\circ\pi_V)^{-1}(c))>0,
\end{aligned}
\end{equation}
where the inequality symbol means that each component of the vector is positive
(typical notation in linear optimization, see, e.g., \cite{Dantzig}) with
$$
\LP(S,(\pi_H\circ\pi_V)^{-1}(c)):=
\begin{cases}
\begin{pmatrix}
v_r&v_1&T&I_{r+h+v}
\end{pmatrix} & \h{ if $h,v>0$},\cr
\cr
\begin{pmatrix}
v_r&T&I_{r+h}
\end{pmatrix} & \h{ if $h>0,v=0$},\cr
\cr
\begin{pmatrix}
v_1&T&I_{r+v}
\end{pmatrix} & \h{ if $h=0,v>0$},\cr
\end{cases}
$$
where 
$$ 
\begin{aligned}
v_r&=
(\overbrace{0,\ldots,0}^{r-1},c_r^2-1,1,\overbrace{0,\ldots,0}^{h-1},\overbrace{0,\ldots,0}^{v})^T
\in \ZZ^{h+v+r},
\cr
v_1&=
(c_1^2-1,\overbrace{0,\ldots,0}^{r-1},\overbrace{0,\ldots,0}^{h},1,\overbrace{0,\ldots,0}^{v-1})^T
\in \ZZ^{h+v+r},
\cr
T&=
\begin{cases}
\begin{pmatrix}
T_r&T_1\cr
T_h&0_{h,v-1}\cr
0_{v,h-1}&T_v\cr
\end{pmatrix}\in\Mat_{r+h+v,h+v-2},
\h{ if $h,v>0$}
\cr
\begin{pmatrix}
T_r\cr
T_h\cr
\end{pmatrix}\in\Mat_{r+h,h-1},
\h{ if $h>0,v=0$}
\cr
\begin{pmatrix}
T_1\cr
T_v\cr
\end{pmatrix}\in\Mat_{r+v,v-1},
\h{ if $h=0,v>0$}
\end{cases}
\end{aligned}
$$
with 
$$ 
\begin{aligned}
T_r&=
\begin{pmatrix}
0&0&\ldots&0\cr
\vdots&\vdots&\ldots&\vdots\cr
0&0&\ldots&0\cr
1&0&\ldots&0\cr
\end{pmatrix}\in\Mat_{r,h-1},
\q\qq
T_1=
\begin{pmatrix}
1&0&\ldots&0\cr
0&0&\ldots&0\cr
\vdots&\vdots&\ldots&\vdots\cr
0&0&\ldots&0\cr
\end{pmatrix}\in\Mat_{r,v-1},
\cr
T_h&=
\begin{pmatrix}
-2&1&\ldots&0\cr
1&-2&\ldots&0\cr
0&1&\ldots&0\cr
\vdots&\vdots&\ldots&\vdots\cr
0&\ldots&0&1\cr
0&0&\ldots&-2\cr
0&\ldots&0&1\cr
\end{pmatrix}\in\Mat_{h,h-1},
\q
T_v=
\begin{pmatrix}
-2&1&\ldots&0\cr
1&-2&\ldots&0\cr
0&1&\ldots&0\cr
\vdots&\vdots&\ldots&\vdots\cr
0&\ldots&0&1\cr
0&0&\ldots&-2\cr
0&\ldots&0&1\cr
\end{pmatrix}\in\Mat_{v,v-1},
\end{aligned}
$$
(here, we use the convention that $T_r$ and $T_h$ are the empty matrix
if $h<2$ and similarly for $T_1$ and $T_v$ if $v<2$).

By Gordan's Theorem \cite[p. 136]{Dantzig}, the 
inequalities \eqref{tailLPIneq} hold if and only if
the only solution $y\in\RR_+^{r+2h+2v}$ to 
$$
\LP(S,(\pi_H\circ\pi_V)^{-1}(c))y=0
$$
is $y=0\in\RR_+^{r+2h+2v}$. 
We treat first the (easy) cases 
$$
(h,v)\in\{(1,0),(0,1),(2,0),(0,2),(1,1),(2,1),(1,2)\}
$$
separately. 

The case $(1,0)$ imposes only the inequality
$\de_1+(c_r^2-1)\be_r>0$ which is feasible.
Similarly, the case $(0,1)$ imposes 
only $\ga_1+(c_1^2-1)\be_1>0$.
The case $(1,1)$ imposes both of these
inequalities, but they are independent, hence feasible.

The case $(2,0)$ imposes the inequalities
\beq
\lb{twozeroineq}
\de_1+(c_r^2-1)\be_r>0, 
\q
\de_2-2\de_1+\be_r>0,
\eeq
which are equivalent via a Fourier--Motzkin elimination  \cite[\S4.4]{Dantzig} to
$
\de_2+\be_r>
2(1-c_r^2)\be_r,
$
i.e., $\de_2>(1-2c_r^2)\be_r$, which is feasible. 
The case $(0,2)$ is handled similarly.
The case $(2,2)$ is feasible
for the same reasons: both sets of inequalities
are feasible and independent.
The case $(2,1)$ (and similarly $(1,2)$)
also follows since it imposes the inequalities
\eqref{twozeroineq} in addition to the independent
inequality $\ga_1+(c_1^2-1)\be_1>0$, thus these
are feasible. This idea of independence will also be useful in the general
case below.

Let us turn to the general case, i.e., suppose $h,v\ge2$.
First, the $r+h$-th row of $\LP(S,(\pi_H\circ\pi_V)^{-1}(c))$ is
$$
(\overbrace{0,\ldots,0}^{h},1,\overbrace{0,\ldots,0}^{v-1+r+h-1},
1,\overbrace{0,\ldots,0}^{v})
.
$$
This implies $y_{h+1}=y_{r+2h+v-1}=0$.
If $h=2$ this implies $y_1=y_{r+2h+v-2}=0$; if $h>3$ this implies
$y_{h}=y_{r+2h+2v-2}=0$ (the $-2$ in the $(h+1)$-th spot in that 
row is taken care of by the fact $y_{h+1}=0$ from the previous step), 
and inductively
we obtain 
$y_{h+1-i}=y_{r+2h+2v-i}=0, \; i=1,\ldots, h-2$,
and finally $y_1=y_{r+h+2v+1}=0$.
 Altogether, we have shown $2h$ of the $y_i$'s are zero.

Second,  the $r+h+v$-th (last) row is
$$
(\overbrace{0,\ldots,0}^{h+v-1},1,\overbrace{0,\ldots,0}^{r+h+v-1},1)
.
$$
This implies   $y_{h+v}=y_{r+2h+2v}=0$.
If $v>2$ this implies
$y_{h+v-1}=y_{r+2h+2v-1}=0$, and inductively
we obtain 
$y_{h+v-i}=y_{r+2h+2v-i}=0, \; i=1,\ldots, v-2$,
and finally $y_2=y_{r+2h+v+1}=0$.
In this step we have shown $2v$ of the $y_i$'s are zero.

So far we have shown $2h+2v$ of the  $y_i$'s are zero using
the last $2h+2v$ rows. 

Finally, we consider the first $r$ rows. There 
are   two special rows with possibly positive
coefficients $c_r^2-1$ and
$c_1^2-1$, however the corresponding $y_1$ and $y_2$ are zero,
so as we have the full rank and identity matrix $I$ (with nonnegative
coefficients) in 
$\LP(S,(\pi_H\circ\pi_V)^{-1}(c))$
it follows that the remaining $r$ variables $y_i$ are zero, concluding 
the proof of Theorem \ref{TailThm1}.
\end{proof}

\def\bi#1{\bibitem{#1}}
	
	\begin{spacing}{0.1}

	\end{spacing}
	
	\bigskip
	
	{\sc University of Maryland}
	
	{\tt yanir@umd.edu}

\end{document}